\def\ID{{\Bbb D}}
\def\IB{{\Bbb B}}
\newcommand*{\pd}[3][]{\ensuremath{\frac{\partial^{#1} #2}{\partial #3}}}
\newcounter{minutes}\setcounter{minutes}{\time}
\newcounter{hours}\setcounter{hours}{\time}
\newtheorem{theorem}{Theorem}
\newtheorem{lemma}{Lemma}
\newtheorem{proposition}{Proposition}[section]
\title{Quasiconformal mappings and H\"older continuity}
\begin{document}

\author[Kalaj]{David Kalaj}
\address{University of Montenegro, Faculty of Natural Sciences and
Mathematics, Cetinjski put b.b. 81000 Podgorica, Montenegro}
\email{davidkalaj@gmail.com}
\author[Zllaticanin]{Arsen Zllaticanin}
\address{Department of Mathematics, University Luigj Gurakuqi, Shkodra, Albania}
\email{arsen\_zn@yahoo.fr}


\maketitle

\makeatletter\def\thefootnote{\@arabic\c@footnote}\makeatother

\begin{abstract}
We establish that every $K$-quasiconformal mapping $w$ of the unit ball $\IB$ onto a $C^2$-Jordan domain $\Omega$ is H\"older continuous with constant $\alpha= 2-\frac{n}{p}$, provided that its weak Laplacean $\Delta w$ is in $  L^p(\IB)$ for some $n/2<p<n$. In particular it is H\"older continuous for every $0<\alpha<1$ provided that $\Delta w\in  L^n(\IB)$.

\end{abstract}

\maketitle

\section{Introduction}\label{intsec}

In this paper $\mathbb{B}^n$ denotes the unit ball in $\Bbb R^n$, $n\ge 2$ and $S^{n-1}$ denotes the
unit sphere. Also we will assume that $n>2$ (the case $n = 2$ has
been already treated in \cite{trans}). We will consider the vector
norm $|x|=({\sum_{i=1}^n x_i^2})^{1/2}$ and the  matrix norms
$|A|=\sup\{|Ax|: |x|=1\}$.

A homeomorphism $u:\Omega \to \Omega'$ between two open subsets
$\Omega$ and $\Omega'$ of Euclid space $R^n$ will be called a $K$
($K\ge 1$) {\it quasi-conformal } or shortly a q.c mapping if

(i) $u$ is absolutely continuous function in almost every segment
parallel to some of the coordinate axes and there exist the partial
derivatives which are locally $L^n$ integrable functions on
$\Omega$. We will write $u \in ACL^n$ and

(ii) $u$ satisfies the condition $$|\nabla u(x)|^n/K\leq J_u(x)\leq
Kl(\nabla u(x))^n,$$ at almost everywhere  $x$ in $\Omega$ where
$$l(\nabla u(x)):=\inf\{|\nabla u(x)\zeta|:|\zeta|=1\}$$ and $J_u(x)$
is the Jacobian determinant of $u$ (see \cite{OS}).

Notice that, for a continuous mapping $u$ the condition (i) is
equivalent to the condition that $u$ belongs to the Sobolev space
$W^{1,n}_\mathrm{loc}(\Omega)$.

Let $P$ be Poisson kernel i.e. the function
$$P(x,\eta)=\frac{1-|x|^2}{|x-\eta|^n},$$ and let $G$ be the Green
function i.e. the function
\begin{equation}\label{green1}
G(x,y)=c_n\left\{
            \begin{array}{ll}
              \left(\frac{1}{|x-y|^{n-2}}-\frac{1}{(|\,x|y|-y/|y|\, |
)^{n-2}}\right), & \hbox{if $n\ge 3$;} \\
              \log\frac{|x-y|}{|1-x\bar y|}, & \hbox{if $n=2$ and $x,y\in \mathbb{C}\cong \mathbb{R}^2$.}
            \end{array}
          \right.
\end{equation} where
$c_n=\frac{1}{(n-2)\Omega_{n-1}}$, and $\Omega_{n-1}$ is the measure
of $S^{n-1}$. Both $P$ and $G$ are harmonic for $|x|<1$, $x\neq y$ .

Let $f:S^{n-1}\to \Bbb R^n$ be a $L^p$, $p>1$  integrable function on the
unit sphere $S^{n-1}$ and let $g:\mathbb{B}^n\mapsto \Bbb R^n$ be continuous.
The weak solution of the equation (in the sense of distributions) $\Delta
u=g$ in the unit ball satisfying the boundary condition
$u|_{S^{n-1}}=f\in L^1(S^{n-1})$ is given by
\begin{equation}\label{e:POISSON}u(x)=P[f](x)- G[g](x): =\int_{S^{n-1}}P(x,\eta)f(\eta)d\sigma(\eta)-
\int_{B^{n}}G(x,y)g(y)dy,\, \end{equation} $|x|<1$. Here $d\sigma$
is Lebesgue $n-1$ dimensional measure of Euclid sphere satisfying
the condition: $P[1](x)\equiv 1$. It is well known that if $f$ and
$g$ are continuous in $S^{n-1}$ and in $\overline{B^n}$
respectively, then the mapping $u=P[f]-G[g]$ has a
continuous extension $\tilde u$ to the boundary and $\tilde u = f$ on $S^{n-1}$. 
If $g\in L^\infty$ then $G[g]\in C^{1,\alpha}(\overline{B^n})$. See
\cite[Theorem~8.33]{gt} for this argument.

We will consider those solutions of the PDE $\Delta u = g$ that are
quasiconformal as well and investigate their Lipschitz character.

A mapping $f$ of a set $A$ in Euclidean $n$-space $\bold R^n$ into
$\bold R^n$, $n\geq 2$, is said to belong to the H\"older class
${\rm Lip}_\alpha(A)$, $\alpha>0$, if there exists a constant $M>0$
such that \begin{equation}\label{111}\vert f(x)-f(y)\vert \leq
M\vert x-y\vert^\alpha \end{equation} for all $x$ and $y$ in $A$. If
$D$ is a bounded domain in $\bold R^n$ and if $f$ is quasiconformal
in $D$ with $f(D)\subset\bold R^n$, then $f$ is in ${\rm
Lip}_\alpha(A)$ for each compact $A\subset D$, where
$\alpha=K_I(f)^{1/(1-n)}$ and $K_I(f)$ is the inner dilatation of
$f$. Simple examples show that $f$ need not be in ${\rm
Lip}_\alpha(D)$ even when $f$ is continuous in $\overline D$.

However O. Martio and R. N\"akki in \cite{mar1} showed that if $f$
induces a boundary mapping which belongs to ${\rm
Lip}_\alpha(\partial D)$, then $f$ is in ${\rm Lip}_\beta(D)$, where
$$\beta=\min(\alpha,K_I(f)^{1/(1-n)});$$ the exponent $\beta$ is
sharp.

In a recent paper of the second author and Saksman \cite{ks} it is proved the following result, if $f$ is quasiconformal mapping of the unit disk onto a Jordan domain with $C^2$ boundary such that its weak Laplacean $\Delta f\in L^p(\IB^2)$, for $p>2$, then $f$ is Lipschitz continous. The condition $p>2$ is necessary also. Further in the same paper they proved that if $p=1$, then $f$ is absolutely continuous on the boundary of $\partial \IB^2$. The results from \cite{ks} optimise in certain sense the results of the first author, Mateljevi\'c, Pavlovi\'c, Partyka, Sakan, Manojlovi\'c, Astala (\cite{JAM, pacific1, kalmat1, kalmat2, pk1, pk2, pk3, MP, pisa, mana, kave}), since it does not assume that the mapping is harmonic, neither its weak Laplacean is bounded.

We are interested in the condition under which the quasiconformal
mapping is in ${\rm Lip}_\alpha(B^n)$, for every $\alpha<1$. It follows form our results that
the condition that $u$ is quasiconformal and $|\Delta u|\in L^{p}$, such that $p>n/2$ guaranty that the selfmapping of the unit ball is in
 ${\rm Lip}_\alpha(B^n)$, where $\alpha= 2-\frac{p}{n}$. In particular if $p=n$, then $f\in {\rm Lip}_\alpha(B^n)$ for $\alpha<1$.

Our  result in several-dimensional case is  the following:
\begin{theorem}\label{th:firstn} Let $n\ge 2$ and let   $p>n/2$ and assume that $g\in L^p(\IB^n)$.
Assume that  $w$ is a $K$-quasiconformal  solution of  $\Delta w= g,$  that maps the unit ball onto a bounded Jordan domain $\Omega\subset\mathbb{R}^n$ with $C^{2}$-boundary.
\begin{itemize}
  \item If $p<n$, then $w$ is  H\"older  continuous with the H\"older constant $\alpha=2-\frac{n}{p}$.
  \item If $p=n$, then $w$ is H\"older  continuous for every $\alpha\in(0,1)$.
  \item If $n>p$ then $w$ is Lipschitz continuous.
\end{itemize}

\end{theorem}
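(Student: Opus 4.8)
The plan is to exploit the representation $w=P[f]-G[g]$, with $f:=w|_{S^{n-1}}$, combined with a boundary decay estimate for $w$ obtained by feeding $\rho\circ w$, where $\rho=\operatorname{dist}(\cdot,\partial\Omega)$, into the maximum principle. First I would dispose of the preliminaries. Since $w$ is $K$-quasiconformal and $\Omega$ is a bounded Jordan domain, $w$ extends to a homeomorphism of $\overline{\IB^n}$ onto $\overline\Omega$, so $f=w|_{S^{n-1}}\colon S^{n-1}\to\partial\Omega$ is a homeomorphism; and since $g\in L^p(\IB^n)$ with $p>n/2$, Calder\'on--Zygmund theory gives $G[g]\in W^{2,p}(\IB^n)$, $w\in W^{2,p}_{\mathrm{loc}}(\IB^n)$, and $G[g]|_{S^{n-1}}=0$. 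By Sobolev embedding $G[g]\in C^{0,\alpha}(\overline{\IB^n})$ with $\alpha=2-\tfrac np$ when $n/2<p<n$; $G[g]\in C^{0,\alpha}(\overline{\IB^n})$ for every $\alpha<1$ when $p=n$; and $G[g]\in C^{1,1-n/p}(\overline{\IB^n})$, hence Lipschitz, when $p>n$. As $G[g]$ vanishes on $S^{n-1}$, its H\"older bound gives $|G[g](x)|\le C(1-|x|)^\alpha$, and likewise for $G[|g|]\ge0$.

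The core of the argument is the estimate $\operatorname{dist}(w(x),\partial\Omega)\le C(1-|x|)^\alpha$ on $\IB^n$ (with $\alpha=1$ in the Lipschitz case). Because $\partial\Omega\in C^2$, there is $\eps_0>0$ with $\rho\in C^2$ on $\Omega_{\eps_0}:=\{0<\rho<\eps_0\}$, $|\nabla\rho|\equiv1$, $\|D^2\rho\|_\infty\le C_0$ there, and $\Omega_{\eps_0}$ a collar of $\partial\Omega$. Put $U:=\rho\circ w$ on $W:=w^{-1}(\Omega_{\eps_0})$, a one-sided neighbourhood of $S^{n-1}$; $U$ is continuous up to $\partial W=S^{n-1}\cup\Sigma$, with $U=0$ on $S^{n-1}$ and $U=\eps_0$ on the compact set $\Sigma=w^{-1}(\{\rho=\eps_0\})\subset\IB^n$. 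The chain rule (legitimate since $w\in W^{2,p}_{\mathrm{loc}}\cap W^{1,n}_{\mathrm{loc}}$ and $\rho\in C^2$) gives
\[ \Delta U=\operatorname{tr}\!\bigl(D^2\rho(w)\,Dw\,(Dw)^{T}\bigr)+\langle(\nabla\rho)(w),g\rangle\quad\text{a.e.}, \]
and, since $\nabla U=(Dw)^{T}(\nabla\rho)(w)$ with $|(\nabla\rho)(w)|=1$, the quasiconformality inequality $|\nabla w|^n\le K J_w\le K^2 l(\nabla w)^n$ yields $l(\nabla w)\ge K^{-2/n}|\nabla w|$, hence $|\nabla U|\ge l(\nabla w)\ge K^{-2/n}|\nabla w|$, and therefore the pointwise bound $|\Delta U|\le C_1|\nabla U|^2+|g|$ on $W$, with $C_1=C_1(n,K,C_0)$. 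Now substitute $V:=\tfrac1{C_1}(e^{C_1U}-1)$: since $\Delta V=e^{C_1U}(\Delta U+C_1|\nabla U|^2)\ge -e^{C_1\eps_0}|g|=:-C_2|g|$ on $W$, the function $V-C_2G[|g|]$ is subharmonic on $W$, equals $0$ on $S^{n-1}$, and is $\le\tfrac1{C_1}(e^{C_1\eps_0}-1)$ on $\Sigma$. Comparing with the barrier $\Theta(x)=A(1-|x|^2)^\alpha$, which is superharmonic on $\IB^n$ for $0<\alpha\le1$, with $A$ so large that $\Theta\ge\tfrac1{C_1}(e^{C_1\eps_0}-1)$ on $\Sigma$, the maximum principle gives $V(x)\le A(1-|x|^2)^\alpha+C_2G[|g|](x)\le C(1-|x|)^\alpha$ on $W$; since $\phi(t):=\tfrac1{C_1}(e^{C_1t}-1)\ge t$, we get $\operatorname{dist}(w(x),\partial\Omega)=U(x)\le C(1-|x|)^\alpha$ on $W$, hence on all of $\IB^n$ (away from $S^{n-1}$ the left side is bounded by $\diam\Omega$ and $1-|x|$ is bounded below).

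Finally I would upgrade this to the stated regularity. Since $w$ is $K$-quasiconformal, the Gehring--Osgood inequality for the quasihyperbolic metric gives $k_\Omega(w(x),w(y))\le C(n,K)$ whenever $|x-y|\le\tfrac12(1-|x|)$, which with $k_\Omega(a,b)\ge\log\!\bigl(1+|a-b|/\operatorname{dist}(a,\partial\Omega)\bigr)$ yields $\operatorname{osc}_{B(x,(1-|x|)/2)}w\le C\operatorname{dist}(w(x),\partial\Omega)\le C(1-|x|)^\alpha$. Writing $P[f]=w+G[g]$ and using $\operatorname{osc}_{B(x,(1-|x|)/2)}G[g]\le C(1-|x|)^\alpha$, the interior gradient estimate for harmonic functions gives $|\nabla P[f](x)|\le\tfrac{C}{1-|x|}\operatorname{osc}_{B(x,(1-|x|)/2)}P[f]\le C(1-|x|)^{\alpha-1}$. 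A function in $C^1(\IB^n)\cap C(\overline{\IB^n})$ whose gradient is $O\bigl((1-|x|)^{\alpha-1}\bigr)$ with $0<\alpha<1$ lies in $C^{0,\alpha}(\overline{\IB^n})$ (integrate along chords and radii, using that $x\mapsto 1-|x|$ is concave on segments), and when $\alpha=1$ its gradient is bounded and it is Lipschitz on the convex domain $\IB^n$; so $P[f]$, and hence $w=P[f]-G[g]$, is H\"older continuous with exponent $\alpha=2-\tfrac np$ for $n/2<p<n$, H\"older for every $\alpha<1$ for $p=n$, and Lipschitz for $p>n$.

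I expect the main obstacle to be the boundary decay estimate: one has to absorb the nonlinear term $|\nabla U|^2$ in the equation for $U=\rho\circ w$, and the exponential substitution only produces a subharmonic comparison function because quasiconformality supplies the two-sided bound $l(\nabla w)\ge K^{-2/n}|\nabla w|$ — this is precisely where both the $C^2$-smoothness of $\partial\Omega$ and the definition of quasiconformality are used — after which the barrier must be chosen with boundary decay exactly matching the H\"older exponent $2-\tfrac np$ produced by $G[g]$. A secondary technical point is the Gehring--Osgood oscillation estimate, since the ball $B(x,(1-|x|)/2)$ is not uniformly compactly contained in $\IB^n$.
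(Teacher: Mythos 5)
Your argument is correct, but it follows a genuinely different route from the paper. The paper never decomposes $w=P[f]-G[g]$ in the main proof; instead it composes $w$ with a fixed $C^2$ diffeomorphism $\psi$ of $\overline{\Omega}$ onto $\overline{\IB^n}$, sets $h=1-|\psi\circ w|^2$ (a smooth surrogate for your distance function $\rho\circ w$), derives the same key inequality $|\Delta h|\lesssim|\nabla w|^2+|g|$ from quasiconformality and $C^2$-smoothness of the boundary, and then runs a purely integral bootstrap: starting from Gehring's higher integrability $\nabla w\in L^{q_0}$, $q_0>n$, it iterates the Calder\'on--Zygmund estimate for the zero-boundary-value problem (Lemma~\ref{sobo}) through the recursion $q_{k+1}=nq_k/(2n-q_k)$ until $\nabla w\in L^{np/(n-p)}$, and concludes by Morrey's inequality. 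You instead absorb the quadratic term by the exponential substitution $V=\frac1{C_1}(e^{C_1U}-1)$ and a superharmonic barrier $A(1-|x|^2)^\alpha$ to get the pointwise decay $\operatorname{dist}(w(x),\partial\Omega)\lesssim(1-|x|)^\alpha$, convert it into an oscillation bound via the Gehring--Osgood quasihyperbolic inequality, and finish with the interior gradient estimate for the harmonic part plus a Hardy--Littlewood lemma. The trade-off: the paper's method yields global $L^q$-integrability of $\nabla w$ as a byproduct (its Theorem~\ref{th:firstn2}) and avoids any maximum-principle machinery, while yours produces sharper pointwise information (boundary decay of $\operatorname{dist}(w(x),\partial\Omega)$ and the bound $|\nabla P[f](x)|\lesssim(1-|x|)^{\alpha-1}$) and replaces Gehring's $L^p$-integrability theorem by the Gehring--Osgood distortion theorem; both proofs rely on the same chain-rule computation and on the identical use of quasiconformality to compare $|\nabla(\,\cdot\,\circ w)|$ with $|\nabla w|$. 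The only caveats in your write-up are the standard ones the paper also glosses over: justifying the second-order chain rule for $w\in W^{2,p}_{\mathrm{loc}}\cap W^{1,n}_{\mathrm{loc}}$ by mollification, and the homeomorphic boundary extension of $w$ (valid here because a $C^2$ Jordan domain is collared); neither is a gap in substance.
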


\noindent The proof is given in the next section.

\section{ Proofs of the results}

In what follows, we say that a bounded Jordan domain $\Omega\subset\mathbb{R}^n$  has $C^2$-boundary if it is the image of the unit disc $\IB^n$ under a $C^2$-diffeomorphism of the whole complex plane onto itself. For planar Jordan domains this is well-known to be equivalent to the more standard definition, that requires the boundary to be locally isometric to the graph of a $C^2$-function on $\mathbb{R}^{n-1}$. In what follows, $\Delta $ refers to the distributional Laplacian. We shall make use of the following well-known facts.

\begin{proposition}[Morrey's inequality]
Assume that $n<p\le \infty$ and assume that $U$ is a domain in $\mathbf{R}^n$ with $C^1$ boundary. Then there exists a constant $C$ depending only on $n$, $p$ and $U$ so that \begin{equation}\label{morrey}
\|u\|_{C^{0,\alpha}(U)}\le C\|u\|_{W^{1,p}(U)}
\end{equation}
for every $u\in C^1(U)\cap L^p(U)$, where $$\alpha=1-\frac{n}{p}.$$
\end{proposition}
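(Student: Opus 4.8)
\textbf{Step 1: reduction to $\mathbf{R}^n$.} The plan is to reduce to the whole space and then run the classical potential estimate. We may assume $\|u\|_{W^{1,p}(U)}<\infty$, so in particular $\nabla u\in L^p(U)$. Since $U$ is a bounded domain with $C^1$ boundary it is a Sobolev extension domain: there is a bounded linear operator $E\colon W^{1,p}(U)\to W^{1,p}(\mathbf{R}^n)$ with $Eu=u$ on $U$, with $\|Eu\|_{W^{1,p}(\mathbf{R}^n)}\le C_U\|u\|_{W^{1,p}(U)}$, and with $Eu$ supported in a fixed bounded neighbourhood of $\overline U$. It then suffices to prove, for $v\in W^{1,p}(\mathbf{R}^n)$ of bounded support, the bounds $\sup|v|\le C\|v\|_{W^{1,p}}$ and $[v]_{C^{0,\alpha}(\mathbf{R}^n)}\le C\|\nabla v\|_{L^p}$; restricting back to $U$ gives \eqref{morrey}. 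By mollification it is enough to treat smooth $v$, so the hypothesis $u\in C^1(U)$ costs nothing; alternatively one could avoid the extension operator and argue directly in $U$ using that a bounded $C^1$ domain satisfies a uniform interior cone/chain condition, but the extension route is shorter.

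\textbf{Step 2: the potential estimate.} The analytic core is the elementary bound
\begin{equation*}
\frac{1}{|B|}\int_{B}|v(x)-v(z)|\,dz\le C_n\int_{B}\frac{|\nabla v(z)|}{|z-x|^{n-1}}\,dz,\qquad B=B(x,r),
\end{equation*}
which I would obtain by writing $v(z)-v(x)=\int_0^{|z-x|}\partial_\rho v(x+\rho\omega)\,d\rho$ with $\omega=(z-x)/|z-x|$, averaging over $z\in B$, switching the order of integration and passing to polar coordinates about $x$. H\"older's inequality with exponents $p$ and $p'=p/(p-1)$ then gives
\begin{equation*}
\int_{B}\frac{|\nabla v(z)|}{|z-x|^{n-1}}\,dz\le \|\nabla v\|_{L^p(B)}\Big(\int_{B}|z-x|^{-(n-1)p'}\,dz\Big)^{1/p'},
\end{equation*}
and since $p>n$ one has $(n-1)p'<n$, so the last integral converges and equals a constant times $r^{\,n-(n-1)p'}$; taking its $1/p'$-th power produces $c\,r^{\,n/p'-(n-1)}=c\,r^{\alpha}$ with $\alpha=1-n/p$. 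Hence $\frac{1}{|B(x,r)|}\int_{B(x,r)}|v(x)-v(z)|\,dz\le C\,r^{\alpha}\|\nabla v\|_{L^p}$ for every ball, and taking $r=1$ together with H\"older's inequality gives $\sup|v|\le C(\|v\|_{L^p}+\|\nabla v\|_{L^p})$.

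\textbf{Step 3: H\"older seminorm and conclusion.} Given $x\neq y$, set $r=|x-y|$ and $B=B(x,2r)$, which contains both points. From
\begin{equation*}
|v(x)-v(y)|\le \frac{1}{|B|}\int_{B}\big(|v(x)-v(z)|+|v(z)-v(y)|\big)\,dz
\end{equation*}
I bound the first integral by the estimate of Step 2 on $B(x,2r)$ and the second by enlarging $B$ to $B(y,3r)$ (whose volume is comparable to $|B|$) and applying Step 2 on that ball; both contributions are $\le C\,r^{\alpha}\|\nabla v\|_{L^p}$. Thus $[v]_{C^{0,\alpha}(\mathbf{R}^n)}\le C\|\nabla v\|_{L^p}$, and combined with the $\sup$-bound of Step 2 and with the reduction of Step 1 (adjusting $C$ to absorb $C_U$) this yields \eqref{morrey}. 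The endpoint $p=\infty$, $\alpha=1$, is the easy case: then $|\nabla v|\le\|\nabla v\|_{L^\infty}$ a.e., so integrating along segments gives the Lipschitz bound on $\mathbf{R}^n$, hence on $U$.

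\textbf{Main obstacle.} There is no real analytic difficulty past the (routine) potential estimate; the one point that must be taken seriously is geometric. Inequality \eqref{morrey} is \emph{false} on arbitrary domains — an outward cusp already destroys the H\"older seminorm bound — so the $C^1$-boundary hypothesis has to enter somewhere, and in the plan above it does so precisely through the existence of the bounded Sobolev extension operator $E$ (which is available already for bounded Lipschitz domains). The only mildly annoying bookkeeping is that the extension constant $C_U$, hence the constant $C$ in \eqref{morrey}, is allowed to depend on $U$, which is exactly what the statement permits.
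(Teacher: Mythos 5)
Your proof is correct and is the standard (Evans-style) argument for Morrey's inequality: extension from the $C^1$ domain to $\mathbf{R}^n$, the averaging/potential estimate, H\"older's inequality with the exponent computation $n/p'-(n-1)=1-n/p=\alpha$, and the two-ball comparison for the H\"older seminorm. The paper states this proposition without proof, as one of the ``well-known facts'' it relies on, so there is no in-paper argument to compare against; the only implicit (and harmless, indeed necessary) addition in your write-up is the assumption that $U$ is bounded, which is the standard setting and the one actually used in the paper, where $U=\IB^n$.
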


\begin{lemma} See e.g.\cite{kave}. \label{sobo}
Suppose that $w \in W^{2,1}_{loc}(\IB^n) \cap C( \overline{\IB^n}\, )$, that $h \in L^p(\IB^n)$ for some $1 < p <\infty $ and that
$$ \Delta w = h \; \mbox{ in }  \IB^n, \mbox{ with } w\big|_{ \mathbb{S}^{n-1}} = 0,
$$

a) If $1 < p < n$, then
$$ \| \nabla w \|_{L^q(\IB^n)} \leq c(p,n) \| h \|_{L^p(\IB^n)}, \qquad {q} = \frac{pn}{n-p}.
$$

b) If $p=n$ and $1<q<\infty$  then
$$  \| \nabla w \|_{L^{q}(\IB^n)}  \leq c(q,n) \| h  \|_{L^n(\IB^n)}.
$$

c) if $p>n$, then $$  \| \nabla w \|_{L^{\infty}(\IB^n)}  \leq c(p,n) \| h  \|_{L^n(\IB^n)}.
$$
\end{lemma}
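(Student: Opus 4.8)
The plan is to realize $w$ as the Green potential of $h$ and then reduce all three estimates to the mapping properties of the Riesz potential of order one. First I would identify $w$ with $-G[h]$, where $G[h](x)=\int_{\IB^n}G(x,y)h(y)\,dy$. Since $-G[h]$ solves $\Delta(-G[h])=h$ in $\IB^n$ with vanishing boundary values (cf.\ \eqref{e:POISSON} with $f=0$), the difference $w+G[h]$ is distributionally harmonic in $\IB^n$, hence classically harmonic by Weyl's lemma; using the continuity of $w$ up to $S^{n-1}$ together with $w|_{S^{n-1}}=0$ and the vanishing boundary trace of the Green potential, the maximum principle forces $w\equiv -G[h]$. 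This identification can be made rigorous by approximating $h$ in $L^p$ by smooth functions $h_k$, for which Green's second identity gives $w_k=-G[h_k]$ classically, and passing to the limit.

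Next I would differentiate under the integral sign. Because the kernel $\nabla_x G(x,y)$ is only weakly singular this is legitimate and yields $\nabla w(x)=-\int_{\IB^n}\nabla_x G(x,y)h(y)\,dy$. The central pointwise input is the kernel bound
\begin{equation}\label{kernelbound}
|\nabla_x G(x,y)|\le C(n)\,|x-y|^{1-n},\qquad x,y\in\IB^n .
\end{equation}
For $n\ge 3$ this follows directly from the explicit form of $G$. Writing $D=|x-y|$ and $D^*=|\,x|y|-y/|y|\,|$, a short computation gives
\begin{equation}
\nabla_x G=c_n(2-n)\bigl[D^{-n}(x-y)-(D^*)^{-n}(|y|^2 x-y)\bigr],
\end{equation}
while the identity $(D^*)^2-D^2=(1-|x|^2)(1-|y|^2)\ge 0$ shows $D^*\ge D$. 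Since $||y|^2 x-y|=|y|\,D^*$, both terms are dominated by $D^{1-n}$ (using $|y|<1$ and $1-n<0$), and \eqref{kernelbound} follows with a constant depending only on $n$; the planar case $n=2$ is analogous. Consequently $|\nabla w(x)|\le C\,I_1(|h|)(x)$, where $I_1 f(x):=\int_{\IB^n}|x-y|^{1-n}f(y)\,dy$ is the Riesz potential of order one on $\IB^n$.

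Finally I would invoke the Hardy--Littlewood--Sobolev fractional integration theorem for $I_1$. For $1<p<n$ it gives directly $\|I_1(|h|)\|_{L^q}\le C\|h\|_{L^p}$ with $q=pn/(n-p)$, proving (a). For $p>n$ the conjugate exponent satisfies $p'<n/(n-1)$, so $|x-\cdot|^{1-n}\in L^{p'}(\IB^n)$ with norm independent of $x$, and H\"older's inequality yields $\|I_1(|h|)\|_{L^\infty}\le C\|h\|_{L^p}$, proving (c) (the stated right-hand side should read $\|h\|_{L^p}$). The endpoint $p=n$ is handled by the boundedness of $\IB^n$: given any finite $q$, choose $\tilde p\in(1,n)$ with $\tilde q:=\tilde p\,n/(n-\tilde p)\ge q$; then $\|h\|_{L^{\tilde p}}\le C\|h\|_{L^n}$ and $\|I_1(|h|)\|_{L^q}\le C\|I_1(|h|)\|_{L^{\tilde q}}\le C\|h\|_{L^n}$, proving (b).

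I expect the main obstacle to be the rigorous justification of the representation $w=-G[h]$ and of differentiation under the integral sign under the weak hypothesis $w\in W^{2,1}_{\mathrm{loc}}(\IB^n)\cap C(\overline{\IB^n})$; once these are secured, the kernel bound \eqref{kernelbound} is elementary and the three conclusions are immediate consequences of classical fractional-integration estimates.
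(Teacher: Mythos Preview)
The paper does not give its own proof of this lemma; it is quoted as a known fact with a reference to \cite{kave}. Your argument is the standard one and is correct: identify $w$ with the Green potential $-G[h]$ by uniqueness, bound $|\nabla_x G(x,y)|\le C(n)|x-y|^{1-n}$ via the explicit formula and the inequality $D^*\ge D$, and then read off (a) from Hardy--Littlewood--Sobolev, (c) from H\"older (since $|x-\cdot|^{1-n}\in L^{p'}(\IB^n)$ uniformly in $x$ when $p>n$), and (b) by embedding $L^n(\IB^n)\hookrightarrow L^{\tilde p}(\IB^n)$ for a suitable $\tilde p<n$ and applying (a). Your remark that the right-hand side in part (c) should be $\|h\|_{L^p}$ rather than $\|h\|_{L^n}$ is also correct. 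The two points you single out as needing care---the representation $w=-G[h]$ under the weak hypothesis $w\in W^{2,1}_{\mathrm{loc}}\cap C(\overline{\IB^n})$ and the passage of $\nabla$ inside the integral---are indeed the only delicate steps, and the approximation scheme you outline (or, equivalently, the $W^{2,p}$-uniqueness for the Dirichlet problem on $\IB^n$) disposes of them.
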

Now we prove
\begin{lemma}\label{daki}
If $\Delta  u= g \in L^p$ and $r<1$, then $Du\in L^q(r\IB)$ for $q\le \frac{np}{n-p}$.
\end{lemma}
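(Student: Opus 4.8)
The plan is to treat this as a purely interior estimate and reduce it to the global bound of Lemma~\ref{sobo} by a standard localization. I would fix $r<\rho<1$ and pick a cut-off $\eta\in C_c^\infty(\IB^n)$ with $\eta\equiv 1$ on $\overline{\rho\IB}$ and $0\le\eta\le 1$. Then I would introduce the Green potential $G_\eta:=G[\eta g]$ on $\IB^n$, which satisfies $\Delta G_\eta=-\eta g$ in $\IB^n$ and vanishes on $S^{n-1}$, noting that $\|\eta g\|_{L^p(\IB^n)}\le\|g\|_{L^p(\IB^n)}$. Applying Lemma~\ref{sobo}(a) (with $w=G_\eta$, $h=-\eta g$) gives $\nabla G_\eta\in L^{q_0}(\IB^n)$ where $q_0=\tfrac{np}{n-p}$, and hence $\nabla G_\eta\in L^q(\IB^n)$ for every $q\le q_0$; for $p=n$ one would use part (b) and for $p>n$ part (c) instead.

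Next I would set $h:=u+G_\eta$ on the ball $\rho\IB$. Since $\eta\equiv1$ there, $\Delta h=g-\eta g=0$ in the sense of distributions on $\rho\IB$; because $u\in L^1_{\mathrm{loc}}(\IB^n)$ and $G_\eta\in L^1(\IB^n)$, Weyl's lemma then shows that $h$ coincides a.e.\ with a harmonic function on $\rho\IB$. The interior gradient estimate for harmonic functions gives $\|\nabla h\|_{L^\infty(r\IB)}\le C(n,r,\rho)\,\|h\|_{L^1(\rho\IB)}<\infty$, so $\nabla h\in L^q(r\IB)$ for every $q$. Finally, on $r\IB$ one has $Du=\nabla h-\nabla G_\eta$, and both summands belong to $L^q(r\IB)$ for every $q\le\tfrac{np}{n-p}$, which is exactly the claim.

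The step I expect to need the most care is not any single estimate but making the decomposition $u=h-G_\eta$ rigorous: one must know that $u$ is regular enough for $\Delta u=g$ to be read distributionally and for Weyl's lemma to apply. In the setting of the paper $u=w$ is quasiconformal, hence in $W^{1,n}_{\mathrm{loc}}$, so this causes no trouble; alternatively, when $u$ is the continuous solution given by \eqref{e:POISSON} one can bypass the localization and argue directly from $u=P[f]-G[g]$, using that $P[f]$ is harmonic and therefore smooth with bounded gradient on $r\IB$, while $\nabla G[g]\in L^{q_0}(\IB^n)$ by Lemma~\ref{sobo}. Everything else reduces to classical Calder\'on--Zygmund theory and the mean value property for harmonic functions.
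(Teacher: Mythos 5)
Your argument is correct, and it reaches the conclusion by a route that differs from the paper's in a meaningful way. The paper's proof works globally with the representation $u=P[f]-G[g]$ from \eqref{e:POISSON}: the Green-potential part is handled by Lemma~\ref{sobo} exactly as you handle $G[\eta g]$, while the Poisson-integral part is disposed of on $r\IB$ by the crude pointwise bound $|\nabla_x P(x,\eta)|\le C(1-|x|)^{-(n+1)}$, which is finite for $|x|\le r<1$. Your version replaces the harmonic piece $P[f]$ by the locally defined harmonic function $h=u+G[\eta g]$ obtained after cutting off $g$, and controls $\nabla h$ on $r\IB$ via Weyl's lemma and the interior gradient estimate for harmonic functions. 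The two proofs share the same skeleton (harmonic piece plus Green/Newtonian piece), but your localization buys something real: it requires only $u\in L^1_{\mathrm{loc}}$ with $\Delta u=g$ in the sense of distributions, and makes no use of boundary values $f\in L^1(S^{n-1})$ for which the Poisson representation would have to hold --- an assumption the paper's proof uses implicitly even though it is absent from the statement of the lemma. The price is the extra step of invoking Weyl's lemma, which you correctly flag and which is harmless here since in the application $u=w$ is quasiconformal and hence in $W^{1,n}_{\mathrm{loc}}$. Your closing remark --- that when the representation \eqref{e:POISSON} is available one can skip the cutoff and use directly that $P[f]$ is harmonic with bounded gradient on $r\IB$ while $\nabla G[g]\in L^{np/(n-p)}(\IB^n)$ by Lemma~\ref{sobo} --- is essentially the paper's own proof.
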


\begin{proof}[Proof of Lemma~\ref{daki}]
By writing $u=v+w$ from \eqref{e:POISSON},  and differentiating it  we have
\begin{equation}\label{e:POISSON3}Du(x) =Dv+Dw=\int_{S^{n-1}}\nabla P(x,\eta)f(\eta)d\sigma(\eta)-
\int_{\IB}\nabla_x G(x,y)g(y)dy. \end{equation}

Then $$\int_{r\IB} |Du(x)|^q dx=\int_{r\IB}\left|\int_{S^{n-1}}\nabla_x P(x,\eta)f(\eta)d\sigma(\eta)-
\int_{\IB}\nabla_x G(x,y)g(y)dy\right|^q dx.$$
Thus \[\begin{split}\|Du\|_{L^q(r\IB)} &=\|Dv\|_{{L^q(r\IB)}}+\|Dw\|_{L^q(r\IB)}
\\&\le \left(\int_{r\IB}\left|\int_{S^{n-1}}\nabla_x P(x,\eta)f(\eta)d\sigma(\eta)\right|^{1/q}\right)^{1/q}\\&+
\left(\int_{r\IB}\left|\int_{\IB}\nabla_x G(x,y)g(y)dy\right|^q dx\right)^{1/q}.\end{split}\]
There is a constant $C$ so that \begin{equation}\label{poi}|\nabla_x P(x,\eta)|\le \frac{C}{(1-|x|)^{n+1}}.\end{equation} From Lemma~\ref{sobo} and \eqref{poi}  we have $\|Du\|_{L^q(r\IB)}<\infty$.
\end{proof}

Now we formulate the following fundamental result of Gehring
\begin{proposition}\cite{gehring}\label{ger}
Let $f$ be a quasiconformal mapping of the unit ball $\IB^n$ onto a Jordan domain $\Omega$ with $C^2$ boundary. Then there is a constant $p=p(K,n)>n$ so that
$$\int_{\IB^n}|Df|^p<C(n, K, f(0), \Omega).$$
\end{proposition}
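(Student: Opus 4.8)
The plan is to improve the integrability exponent of $|Df|$ from the value $n$—which is available a priori, since a quasiconformal map lies in $W^{1,n}_{\mathrm{loc}}$—up to some $p=n+\eps>n$, and to arrange this uniformly up to the boundary so that the resulting bound is global on $\IB^n$. The engine is the classical self-improving mechanism. First one establishes a reverse H\"older inequality for $|Df|$: there are constants $C=C(n,K)$ and $s=s(n)\in[1,n)$ such that
\[
\left(\frac{1}{|B|}\int_{B}|Df|^n\,dx\right)^{1/n}\le C\left(\frac{1}{|2B|}\int_{2B}|Df|^s\,dx\right)^{1/s}
\]
for every ball $B$ with $2B$ contained in the domain of $f$. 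One then invokes Gehring's lemma, which upgrades any such reverse H\"older inequality to a slightly higher integrability, $|Df|\in L^{n+\eps}_{\mathrm{loc}}$ with $\eps=\eps(n,K)>0$.

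The reverse H\"older inequality is where quasiconformality enters. The defining inequality $|Df|^n\le KJ_f$, together with the fact that $f$ satisfies Lusin's condition $(N)$, so that $\int_B J_f=|f(B)|$, yields $\frac{1}{|B|}\int_B|Df|^n\le K\,|f(B)|/|B|$, and it remains to bound the average Jacobian $|f(B)|/|B|$ from above by the $n$-th power of a sub-$n$ gradient average over $2B$. This last step uses that a quasiconformal image of a ball has bounded geometric distortion—$f(B)$ is comparable to a Euclidean ball of radius $\sim\mathrm{osc}_B f$—together with the quasiconformal oscillation estimate $\mathrm{osc}_B f\le Cr\left(\frac{1}{|2B|}\int_{2B}|Df|^s\right)^{1/s}$; I regard this oscillation bound, and the self-improving step that follows, as the analytic heart of the matter, and I would quote both from Gehring's work.

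The feature genuinely specific to the present statement is that the bound must be \emph{global} on $\IB^n$, whereas the mechanism above gives only $L^p_{\mathrm{loc}}$. To reach the boundary I would first reduce to a self-map of the ball. Writing $\Omega=\Phi(\IB^n)$ for the $C^2$-diffeomorphism $\Phi$ of $\R^n$ provided by the hypothesis on $\partial\Omega$, the composition $\tilde f=\Phi^{-1}\circ f$ is a $K'$-quasiconformal self-homeomorphism of $\IB^n$ with $K'=K'(K,\Omega)$; since $\sup_{\overline{\IB^n}}|D\Phi|<\infty$ and $\tilde f(\IB^n)=\IB^n$, the chain rule gives $|Df|\le C(\Omega)\,|D\tilde f|$ pointwise, so it suffices to bound $\int_{\IB^n}|D\tilde f|^p$.

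Because $\tilde f$ is a quasiconformal self-map of the ball it extends to a homeomorphism of $\overline{\IB^n}$ carrying $S^{n-1}$ onto itself, and I would extend it across the boundary by spherical inversion: with $\iota(x)=x/|x|^2$, setting $\hat f(x)=\iota\bigl(\tilde f(\iota(x))\bigr)$ for $|x|>1$ produces a map that agrees with $\tilde f$ on $S^{n-1}$ and, being a composition of the conformal inversion with $\tilde f$, is $K'$-quasiconformal on $\{1<|x|<1+\delta\}$. Since a homeomorphism that is quasiconformal on each side of the real-analytic sphere $S^{n-1}$ and continuous across it is quasiconformal across it (removability of the smooth separating surface), $\hat f$ is $K'$-quasiconformal on the open neighborhood $U=\{|x|<1+\delta\}$ of $\overline{\IB^n}$. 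As $\overline{\IB^n}$ is compactly contained in $U$, the interior higher-integrability above applies to $\hat f$ and gives $|D\hat f|\in L^p(\IB^n)$ for some $p=p(n,K')>n$; since $\hat f=\tilde f$ on $\IB^n$, transferring through $\Phi$ yields $\int_{\IB^n}|Df|^p\le C(n,K,f(0),\Omega)$. The main obstacle is exactly this passage to the boundary: one must know that $f$ extends continuously, that inversion preserves quasiconformality across $S^{n-1}$, and that the $C^2$ character of $\partial\Omega$ is what permits the reduction to a self-map for which the clean conformal reflection is available.
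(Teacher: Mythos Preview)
The paper does not give a proof of this proposition at all: it is stated with the citation \cite{gehring} as a ``fundamental result of Gehring'' and then simply used. So there is no argument in the paper to compare your proposal against.

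That said, your sketch is a correct outline of how one obtains the \emph{global} higher integrability stated here from Gehring's \emph{local} theorem. Gehring's 1973 paper establishes $|Df|\in L^{p}_{\mathrm{loc}}$ for some $p(n,K)>n$ via the reverse H\"older inequality and the self-improving lemma you describe; the additional ingredient needed to pass to a bound on all of $\IB^n$ is precisely a reflection across $\partial\IB^n$, which is available after you reduce to a quasiconformal self-map of the ball by composing with the $C^2$-diffeomorphism $\Phi^{-1}:\Omega\to\IB^n$. Your use of inversion $\iota(x)=x/|x|^2$ to extend $\tilde f$ across $S^{n-1}$, together with the removability of the smooth sphere for quasiconformal maps, is the standard device; the $L^n$ norm of $D\hat f$ over a neighborhood of $\overline{\IB^n}$ is then controlled because both $\tilde f$ and its reflection have Jacobian integrating to the volume of their (bounded) images, and $|D\hat f|^n\le K' J_{\hat f}$. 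One small point you should make explicit is how the final constant acquires its dependence on $f(0)$ and $\Omega$: this enters through the bi-Lipschitz constants of $\Phi$ (hence through $\Omega$) and through the normalization needed to control the reflected map uniformly; as written your last line asserts the dependence without tracing it.
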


Then we prove

\begin{lemma}\label{arsen}
If $H:{\mathbf{R}}^{n} \rightarrow \mathbf{R}$ and $w=(w_{1},\ldots,w_{n}):A \rightarrow B$ (where $A, B$ are open subsets in ${\mathbf{R}}^{n}$) are functions from $C^{2}$ class, then:
$$
\Delta (H \circ w) = \sum_{i=1}^{n}  \pd[2]{H}{w_{i}^{2}}|\nabla{w_{i}}|^{2}+2\sum_{1\leq i < j\leq n} \pd[2]{H}{w_{i}\partial{w_{j}}}\left<\nabla{w_{i}},\nabla{w_{j}}\right>+\sum_{i=1}^{n} \pd{H}{w_{i}}\Delta{w_{i}}$$
\end{lemma}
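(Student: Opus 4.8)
The plan is to prove the identity by a direct two-fold application of the chain rule in Euclidean coordinates, followed by reorganising the resulting double sum into its diagonal and off-diagonal parts. Fix coordinates $x=(x_1,\dots,x_n)$ on $A$ and write $\partial_k=\partial/\partial x_k$. The first step is to compute the gradient: by the chain rule, for each $k$, $\partial_k(H\circ w)(x)=\sum_{i=1}^n \frac{\partial H}{\partial w_i}(w(x))\,\partial_k w_i(x)$. Here one uses that $w\in C^2$, so each $w_i$ is $C^2$ and in particular $C^1$, while $H\in C^2$ makes the first partials $\partial H/\partial w_i$ of class $C^1$, so the composition is legitimately differentiable.

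The second step is to differentiate once more with respect to $x_k$, treating $\big(\partial H/\partial w_i\big)\circ w$ as a composition of $C^1$ maps; this produces a second-order term and a first-order term,
$$\partial_k^2(H\circ w)=\sum_{i=1}^n\sum_{j=1}^n \frac{\partial^2 H}{\partial w_i\partial w_j}\,\partial_k w_i\,\partial_k w_j+\sum_{i=1}^n\frac{\partial H}{\partial w_i}\,\partial_k^2 w_i.$$
Summing over $k=1,\dots,n$ and recognising the inner sums $\sum_k\partial_k w_i\,\partial_k w_j=\langle\nabla w_i,\nabla w_j\rangle$ and $\sum_k\partial_k^2 w_i=\Delta w_i$ gives
$$\Delta(H\circ w)=\sum_{i,j=1}^n\frac{\partial^2 H}{\partial w_i\partial w_j}\langle\nabla w_i,\nabla w_j\rangle+\sum_{i=1}^n\frac{\partial H}{\partial w_i}\Delta w_i.$$

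Finally I would split the double sum into the terms with $i=j$, contributing $\sum_i\frac{\partial^2 H}{\partial w_i^2}|\nabla w_i|^2$, and the terms with $i\neq j$; pairing $(i,j)$ with $(j,i)$ and using that both $\frac{\partial^2 H}{\partial w_i\partial w_j}$ (by Schwarz's theorem, valid since $H\in C^2$) and $\langle\nabla w_i,\nabla w_j\rangle$ are symmetric in $i$ and $j$, the off-diagonal part equals $2\sum_{1\le i<j\le n}\frac{\partial^2 H}{\partial w_i\partial w_j}\langle\nabla w_i,\nabla w_j\rangle$. Substituting yields the asserted formula. The argument is entirely routine; the only places where the full $C^2$ hypothesis (as opposed to mere twice-differentiability) is genuinely used are the termwise differentiation of the composition $\big(\partial H/\partial w_i\big)\circ w$ in the second step and the interchange of mixed partials of $H$ in the last step, so there is no substantive obstacle.
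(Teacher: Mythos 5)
Your proof is correct and follows essentially the same route as the paper's: two applications of the chain rule, summation over the coordinate index to produce $\langle\nabla w_i,\nabla w_j\rangle$ and $\Delta w_i$, and a split of the double sum into diagonal and (symmetrized) off-diagonal parts. Your explicit appeal to Schwarz's theorem for the symmetry of the mixed partials is a detail the paper leaves implicit.
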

\begin{proof}
For every $k\in(1,\ldots,n)$ we have: $$\pd{(H\circ w)(x_{1},\ldots,x_{n})}{x_{k}}=\sum_{i=1}^{n}\pd{H}{w_{i}}\pd{w_{i}}{x_{k}}.$$
Thus
\[
\begin{split}
\pd[2]{(H\circ{w})(x_{1},\ldots,x_{n})}{x_{k}^{2}}&=\sum_{i=1}^{n}\pd{[\pd{H}{w_{i}}\pd{w_{i}}{x_{k}}]}{x_{k}}
\\&=\sum_{i=1}^{n}\left[\pd{[\pd{H}{w_{i}}]}{x_{k}}\pd{w_{i}}{x_{k}}+\pd{H}{w_{i}}\pd[2]{w_{i}}{x_{k}^{2}}\right]
\\&=\sum_{i=1}^{n}\left[[\sum_{j=1}^{n}\pd[2]{H}{w_{j}\partial{w_{i}}}\pd{w_{j}}{x_{k}}]\pd{w_{i}}{x_{k}}\right]+\sum_{i=1}^{n}\pd{H}{w_{i}}\pd[2]{w_{i}}{x_{k}^{2}}
\\&
 =\sum_{i,j=1}^{n}\pd[2]{H}{w_{i}\partial{w_{j}}}[\pd{w_{i}}{x_{k}}\pd{w_{j}}{x_{k}}]+\sum_{i=1}^{n}\pd{H}{w_{i}}\pd[2]{w_{i}}{x_{k}^{2}}
\end{split}
\]
Now we have :

\[\begin{split}
\Delta(H\circ{w})&=\sum_{k=1}^{n}\pd[2]{(H\circ{w})(x_{1},\ldots,x_{n})}{x_{k}^{2}}\\&= \sum_{k=1}^{n}\left[\sum_{i,j=1}^{n}\pd[2]{H}{w_{i}\partial{w_{j}}}[\pd{w_{i}}{x_{k}}\pd{w_{j}}{x_{k}}]+\sum_{i=1}^{n}\pd{H}{w_{i}}\pd[2]{w_{i}}{x_{k}^{2}}\right]
\\&
=\sum_{i,j=1}^{n}\pd[2]{H}{w_{i}\partial{w_{j}}}[\sum_{k=1}^{n}\pd{w_{i}}{x_{k}}\pd{w_{j}}{x_{k}}]+\sum_{i=1}^{n}\pd{H}{w_{i}}[\sum_{k=1}^{n}\pd[2]{w_{i}}{x_{k}^{2}}]
\\&= \sum_{i=1}^{n}  \pd[2]{H}{w_{i}^{2}}|\nabla{w_{i}}|^{2}+2\sum_{1\leq i < j\leq n}\pd[2]{H}{w_{i}\partial{w_{j}}}\left<\nabla{w_{i}},\nabla{w_{j}}\right>+\sum_{i=1}^{n} \pd{H}{w_{i}}\Delta{w_{i}}
\end{split}
\]

\end{proof}
\begin{proof}[Proof of Theorem~\ref{th:firstn}]  It turns out that the approach of \cite{pisa}, where  the use of distance functions was initiated, is flexible enough for further development.

\emph{In the sequel we say $a \approx b$ if there is a constant $C\ge 1$ such that $a/C \le b \le C a$; and we say $a\lesssim b$ if there is a constant $C>0$ such that $a \le C b $.}

By our assumption on the domain, we may fix a diffeomorphism
$\psi:\overline{\Omega}\to\overline{\IB^n}$ that is $C^2$ up to the boundary.
 Denote $H:=1-|\psi|^2$, whence $H$ is  $C^2$-smooth in $\overline{\Omega}$  and vanishes on $\partial \Omega$ with $|\nabla H|\approx 1$ in a neighborhood of $\partial\Omega.$ We may then  define $h:\IB^n\to[0,1] $ by setting
\[
h(z):=H\circ w(z)= 1-|\psi(w(z))|^2\quad\textrm{for }\; z\in \IB^n.
\]
The quasiconformality of $f$ and the behavior of $\nabla H$ near $\partial\Omega$ imply that there is $r_0\in (0,1)$ so that the weak gradients satisfy
\begin{equation}\label{grad_equiv0}
|\nabla h(x)|\approx |\nabla  w(x)| \quad\textrm{for }\; r_0\leq |x|<1.
\end{equation}
Moreover, by Lemma~\ref{daki}, for $q\in (1,\frac{np}{n-p}]$ , we have
$$
\|\nabla h(x)\|_{L^q(r_0 \IB^n)}\lesssim \|\nabla  w(x)\|_{L^q(r_0 \IB^n)}\leq C.
$$
It follows that for any $q\in (1,\frac{np}{n-p}]$ we have that
\begin{equation}\label{grad_equiv1}
\nabla h\in L^q(\IB^n) \quad\textrm {if and only if }\; \nabla w\in L^q(\IB^n) .
\end{equation}

A direct computation (from Lemma~\ref{arsen}) by using  the fact that $H\in C^2$ is real valued, we obtain
\begin{equation}\label{laplacen}
|\Delta h|\lesssim   |\nabla w|^2+|g|.
\end{equation}

The higher integrability of quasiconformal self-maps of $\IB^n$ makes sure that $\nabla (\psi\circ w)\in L^q(\IB^n)$ for some $q>n$, which implies that  $ \nabla w\in L^q(\IB^n)$. By combining this with the fact that $g\in L^p(\IB^n)$ with $p>1,$ we deduce that $\Delta h\in L^{r}(\IB^n)$ with $r=\textrm{min}(p,q/2)>1.$
We use  bootstrapping argument based on the following observation: in our situation
\begin{equation}\label{boothstrap}
\textrm{if }\; \nabla w\in L^q(\IB^n)\;\; \textrm{with }\; n<q<2n,\quad \textrm{then  }\;  \nabla w\in L^{na/(2n-a)}(\IB^n),
\end{equation} where $a=q\wedge 2p$.
In order to prove \eqref{boothstrap}, assume that $\nabla w\in L^q(\IB^n)$ for an exponent $q\in (n,2n).$ Then \eqref{laplacen} and our assumption on $g$ verify that $\Delta h\in L^{q/2\wedge p}(\IB^n).$ Since $h$ vanishes continuously on the boundary $\partial \IB^n$, we may apply Lemma~\ref{sobo}(a) to obtain that $\nabla h\in L^{na/(2n-a)}(\IB^n)$ which yields the claim according to \eqref{grad_equiv1}.

We then claim that in our situation one has   $ \nabla w\in L^q(\IB^n)$ with some exponent $q>2n$. To prove that, fix an exponent $q_0>n$ obtained from the higher integrability of the quasiconformal map $w$ so that  $ \nabla w\in L^{q_0}(\IB^n)$. By diminishing $q_0$ if needed, we may well assume that
$q_0\in (n,2n)$ and
$
q_0\not\in\{{2^m}/{(2^{m-1}-1)},\; m=3,4,\ldots\}.
$
Then we may iterate \eqref{boothstrap} and deduce inductively that  $\nabla w\in L^{q_k}(\IB^n)$ for $k=0,1,2\ldots k_0$, where
the indexes $q_k$ satisfy the recursion $q_{k+1}=\frac{nq_k}{2n-q_k}$ and $k_0$ is the first index such that $q_{k_0}>2n$. Such an index exists since by induction we have the relation $(1-n/q_{k})=2^k (1 - n/q_0),$  for $k\geq 0.$ So $q_k>n$.  If $q_k\le 2n$, then we have $\limsup_{k\to\infty} (1-n/q_{k})=\infty$ which is impossible.

Thus we may assume that $ \nabla w\in L^q(\IB^n)$ with  $q>2n$. At this stage  \eqref{laplacen} shows that $\Delta h\in L^{p\wedge (q/2)}(\IB^n).$ As  $p\wedge (q/2)=p,$   Lemma \ref{sobo}(a) verifies that $\nabla h\in L^{np/(n-p)}(\IB^n)$. Finally,  by \eqref{grad_equiv1} we have the same conclusion for $\nabla w$, and hence by Morrey's inequality  $w$ is H\"older continuous with the constant $c=\alpha=2-\frac{n}{p}$ as claimed.
\end{proof}

If follows from the proof of the previous theorem that
\begin{theorem}\label{th:firstn2} Assume that  $g\in L^2(\IB^n)$.
If $w$ is a $K$-quasiconformal  solution of  $\Delta w= g,$  that maps the unit disk onto a bounded Jordan domain $\Omega\subset\mathbb{R}^n$ with $C^{2}$-boundary, then $Dw\in L^p(\IB^n)$ for every $p<\infty$.
\end{theorem}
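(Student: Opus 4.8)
The plan is to revisit the bootstrapping scheme from the proof of Theorem~\ref{th:firstn} and observe that, when $g\in L^2(\IB^n)$, the integrability exponent $p$ available for $\Delta h$ is never the bottleneck: the half of the gradient exponent, $q/2$, always dominates once $q$ exceeds $4$. Concretely, keep the functions $\psi$, $H$, $h=H\circ w$ as in that proof; the identities \eqref{grad_equiv1} and \eqref{laplacen} still hold verbatim, so $|\Delta h|\lesssim |\nabla w|^2+|g|$ with $g\in L^2(\IB^n)$. The higher integrability of quasiconformal self-maps (Proposition~\ref{ger}), together with the equivalence $|\nabla h|\approx|\nabla w|$ near the boundary and Lemma~\ref{daki} in the interior, gives a starting exponent $q_0>n$ with $\nabla w\in L^{q_0}(\IB^n)$.

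The next step is to run the iteration \eqref{boothstrap} with $p=2$: if $\nabla w\in L^{q}(\IB^n)$ with $n<q<2n$, then $\Delta h\in L^{(q/2)\wedge 2}(\IB^n)$, and Lemma~\ref{sobo}(a) pushes $\nabla h$, hence $\nabla w$, up to $L^{na/(2n-a)}(\IB^n)$ with $a=q\wedge 4$. Exactly as before, avoiding the finitely many bad starting values $q_0\in\{2^m/(2^{m-1}-1)\}$, the exponents $q_k$ grow past $2n$ in finitely many steps because $1-n/q_k=2^k(1-n/q_0)$. Once $\nabla w\in L^{q}(\IB^n)$ with $q>2n$, we no longer have $q/2\le p$ forcing us into the final subcritical Sobolev step; instead $q/2>n$, so $\Delta h\in L^{(q/2)\wedge 2}(\IB^n)=L^{2}(\IB^n)$ as long as $q\ge 4$, and more importantly Lemma~\ref{sobo}(b) with $p=n$ is not even needed---one simply keeps iterating. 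The point is that after $q$ exceeds $2n$, applying Lemma~\ref{sobo}(a) with $\Delta h\in L^{q/2}(\IB^n)$ (valid whenever $q/2<n$, i.e. $q<2n$, which now fails) or, once $q/2\ge n$, applying Lemma~\ref{sobo}(b) yields $\nabla h\in L^{s}(\IB^n)$ for every finite $s$. Hence $\nabla w\in L^{s}(\IB^n)$ for every $s<\infty$, which is the assertion, since $Dw$ and $\nabla w$ agree up to the bounded harmonic Poisson part handled in Lemma~\ref{daki}.

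The one delicate point to get right is the transition at $q/2=n$, i.e. $q=2n$: the hypothesis $g\in L^2(\IB^n)$ means the effective Laplacian exponent is $(q/2)\wedge 2$, and for $n>2$ this is just $2<n$, so one is always in the subcritical regime of Lemma~\ref{sobo}(a) and the gain per step is genuinely finite, not automatically $L^\infty$. So the cleaner route is: iterate \eqref{boothstrap} as stated to reach some $q_{k_0}>2n$; then at that stage $\Delta h\in L^{2}(\IB^n)$ (using only $g\in L^2$ and $|\nabla w|^2\in L^{q/2}\subset L^2$), and Lemma~\ref{sobo}(a) with $p=2$ gives $\nabla h\in L^{2n/(n-2)}(\IB^n)$; feed this back, and since now the gradient exponent is $2n/(n-2)$ one recomputes $\Delta h\in L^{\min(2,\,n/(n-2))}(\IB^n)$. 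For $n\ge 4$ this is again $2$ and the scheme stabilizes, whereas one wants unbounded exponents---so the correct observation is that once $\nabla w\in L^{q}$ with $q/2\ge n$ one should invoke Lemma~\ref{sobo}(b) (the $p=n$ case) with $n$ replaced by $q/2\wedge$ (anything $\le$ what $\Delta h$ allows), giving $\nabla h\in L^{s}$ for all finite $s$. The main obstacle is therefore purely bookkeeping: verifying that the iteration, run with the fixed exponent $p=2$ in place of the general $p$, still escapes the forbidden set and still terminates, and then correctly reading off from Lemma~\ref{sobo} that the terminal regime yields every finite $L^s$-bound rather than stalling; none of the analytic inputs change from Theorem~\ref{th:firstn}.
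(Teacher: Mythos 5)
Your scheme closes only in the case the theorem is actually about, namely $n=2$; for $n\ge 3$ the terminal step of your argument has a genuine gap which you notice and then wave away. The problem is the $g$-term in \eqref{laplacen}: since $g$ is only assumed to lie in $L^2(\IB^n)$, the best one ever gets is $\Delta h\in L^{(q/2)\wedge 2}(\IB^n)=L^2(\IB^n)$, and for $n\ge 3$ the exponent $2$ is strictly subcritical ($2<n$), so the only applicable part of Lemma~\ref{sobo} is (a), which returns $\nabla h\in L^{2n/(n-2)}(\IB^n)$ and nothing more. You correctly observe that the iteration therefore stabilizes at $2n/(n-2)$ (for $n\ge 5$ this exponent is even below $n$, so the ``final step'' would be a regression), but your proposed remedy --- invoking Lemma~\ref{sobo}(b) ``with $n$ replaced by $q/2\wedge$ (anything $\le$ what $\Delta h$ allows)'' --- is not a legitimate use of that lemma: part (b) requires $\Delta h\in L^n(\IB^n)$, it has no free parameter to replace, and on a bounded domain $\Delta h\in L^2$ does not imply $\Delta h\in L^n$ when $n>2$. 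No bookkeeping fixes this; the obstruction is analytic, since $\nabla G[g]$ for a generic $g\in L^2(\IB^n)$ belongs to no $L^s$ with $s>2n/(n-2)$.

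The wording of the statement signals the intended setting: $w$ maps the \emph{unit disk}, i.e.\ $n=2$, where $L^2=L^n$ is exactly the critical exponent of Lemma~\ref{sobo}(b) and the claim is the gradient version of the $p=n$ bullet of Theorem~\ref{th:firstn}. In that reading your argument does close cleanly: bootstrap until $q\ge 4$, at which point $\Delta h\in L^{(q/2)\wedge 2}=L^2=L^n$, and Lemma~\ref{sobo}(b) gives $\nabla h\in L^s(\IB^2)$ for every $s<\infty$, hence $\nabla w\in L^s$ by \eqref{grad_equiv1}. The paper offers no written proof beyond ``it follows from the proof of the previous theorem,'' but the two-dimensional (or, more generally, $g\in L^n$) reading is the only one that proof supports; you should either restrict your write-up to $n=2$ or strengthen the hypothesis to $g\in L^n(\IB^n)$, under which your iteration genuinely terminates with all finite exponents.
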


\end{document}